\documentclass[12pt,a4paper,reqno]{amsart}
\usepackage{amstext,fullpage}
\newtheorem{theorem}{Theorem}
\newtheorem{lemma}[theorem]{Lemma}

\newtheorem{proposition}[theorem]{Proposition}

\newtheorem{example}{Example}

\newcommand{\dist}{{\rm dist}}

\newcommand{\diam}{{\rm diam}}
\newcommand{\1}{{\bf 1\!\!1}}
\newcommand{\D}{\Delta}
\newcommand{\w}{\omega}
\newcommand{\N}{{\rm I\!N}}
\newcommand{\R}{{\rm I\!R}}

\newcommand{\f}{\varphi}
\newcommand{\Int}{{\rm Int}}
\newcommand{\Cl}{{\rm Cl}}
\newcommand{\summ}{\sum\limits}
\newcommand{\prodd}{\prod\limits}

\title{Exact packing dimension of random self-similar sets.}
\author{Artemi Berlinkov}
\address{Department of Mathemtics, Bar-Ilan University,
Ramat Gan, 5290002, Israel}
\email{artem.berlinkov@gmail.com}
\keywords{Exact packing dimension, packing measure,
random strong open set condition, random fractal.}
\date{\today}
\thanks{Research was supported by ISF 
grant 396/15; Center for Absorption in Science,  Ministry of Immigrant 
Absorption, State of Israel}
\subjclass[2010]{Primary 28A78, 28A80; Secondary 60D05, 60J80.}

\begin{document}
\begin{abstract}
We extend the results previously published on exact packing dimensions
of random recursive constructions to include
constructions satisfying commonly occurring conditions. We remove the restrictive assumption
that the diameter reduction ratios between the offsping
are either constant almost surely, or 0.
\end{abstract}
\maketitle

{\noindent \bf 1. }An exact packing dimension for the set $K\subset\R^m$ is the gauge function $\f(t)$
(i.e. a non-decreasing function such that $\f(0+)=0$), giving the set positive and finite packing measure, i.e. such that $0<{\mathcal P}^\f(K)<\infty$. No additional assumptions (like doubling condition) are imposed on the gauge function. In this article we consider random sets $K$ obtained in the limit of a {\it random recursive construction}, or homogeneous random fractal.

The definition of packing measure arose in 1980's as a counterpart to the Hausdorff measure. The definitions
and properties of Hausdorff and packing measures can be found in the monograph by P. Mattila (\cite{M}).  The definition of random recursive construction is adduced below, it can also be found in \cite{Be},\cite{F},\cite{GMW},\cite{MW}.

Let $n\in\N,$ $\D=\{1,\dots,n\}$.
Denote by $\D^*=\mathop{\bigcup}\limits_{j=0}^\infty \D^j$
the set of all finite sequences of numbers in $\D$,
and by $\D^\N$ the set of all their infinite sequences.
The result of concatenation of two
finite sequences $\sigma$ and $\tau$ from $\D^*$ is denoted
by $\sigma*\tau.$ For a finite sequence $\sigma$, its length
will be denoted by $|\sigma|.$
For a sequence $\sigma$ of length at least $k,$
$\sigma|_k$ is a sequence consisting
of the first $k$ numbers in $\sigma.$

Suppose that $J$ is a compact subset of ${\R}^d$ such that
$J=\Cl(\Int(J)),$ without loss of generality its diameter equals one.
The construction is a probability space $(\Omega, \Sigma, P)$
with a collection of random subsets of 
${\R}^d$ -- $\{J_\sigma(\w)|\w\in\Omega, \sigma\in\D^*\}$,
so that the following conditions hold.
\renewcommand{\theenumi}{\roman{enumi}}
\begin{enumerate}
\item\label{cond0} $J_\emptyset(\w)=J$ for almost all $\w\in\Omega,$
\item\label{cond1} For all $\sigma\in\D^*$ the maps $\w\to J_\sigma(\w)$
are measurable with respect to $\Sigma$ and the topology generated by the Hausdorff
metric on the space of compact subsets,
\item\label{cond2} For all $\sigma\in\D^*$ and $\w\in\Omega$,
the sets $J_\sigma$, if non-empty, are geometrically similar to $J$,
\item\label{cond3} For almost every $\w\in\Omega$ and all $\sigma\in\N^*,$ 
$i\in\N,$ $J_{\sigma*i}$ is a proper subset of $J_\sigma$ provided  $J_\sigma \neq \emptyset,$
\item\label{cond4} The construction satisfies the random {\it open set condition}:
if $\sigma$ and $\tau$ are two distinct sequences of the same length, then 
$\Int(J_\sigma)\cap\Int(J_\tau)=\emptyset$ a.s. and, finally,
\item\label{cond5}
The random vectors ${\mathbf T}_\sigma=(T_{\sigma*1}, T_{\sigma*2}, \dots),$
$\sigma\in\N^*$, are conditionally i.i.d. given that $J_\sigma(\w)\ne\emptyset$,
where $T_{\sigma*i}(\w)$ equals the ratio of the diameter of $J_{\sigma*i}(\w)$ to the diameter
of $J_\sigma(\w)$ .
\end{enumerate}

By \cite[Proposition 1]{Be}, we can replace conditional i.i.d. vectors in condition \eqref{cond5} by
an i.i.d. sequence of random vectors ${\mathbf T}_\sigma$, such that for non-empty offspring the
vector components give the diameter reduction ratios between the offspring and the parent.
The object of study is the random set 
$$K(\w)=\mathop{\bigcap}\limits_{k=1}^\infty
\mathop{\bigcup}\limits_{\sigma\in\D^k}J_\sigma (\w).$$

Graf et al. in \cite{GMW} have found under certain conditions
the exact Hausdorff dimension of the limit set
so that the $\f$-Hausdorff measure of
$K(\w)$ is positive and finite almost surely given $K(\w)\ne\emptyset.$
In \cite{BM} Berlinkov and Mauldin have found
an upper bound on exact packing dimension.
In \cite{Be2} Berlinkov proved that under two additional assumptions
this upper bound is the best. These assumptions will be shown late, one of them states
that there exists $\delta>0$, such that $P(T_i=\delta|T_i\ne 0)=1$. In this article we
show that the same result holds without the last assumption, and this allows for more applications
(see the examples). 

{\noindent \bf 2. }Now we will introduce more notation and state the result more precisely. 
Let $\alpha$ denote the almost sure
Hausdorff dimension of the non-empty random limit set $K(\w)$. It is the root of equation (see, e.g. \cite{F})
\[
E\Big[\sum\limits_{i=1}^n T_i^\alpha\Big]=1.
\]
By a result from \cite{BM}, the packing and Minkowski dimensions also equal $\alpha$. 

For $\sigma\in\D^*$ let $l_\sigma(\w)=\diam(J_\sigma(\w))$, $K_\sigma(\w)$ is the limit set we obtain
if the code tree is pruned to start at $\sigma$, $K_\sigma(\w)\subset J_\sigma(\w)$. It has been
noted (see, e.g. \cite{GMW}) that for $\sigma\in\D^m$ the sums below form a martingale whose limits
is denoted by
\[
X_\sigma=\lim\limits_{n\to\infty}\sum\limits_{|\tau|=n}l_{\sigma*\tau}^\alpha.
\]
Further, following \cite{MW}, we define a random measure $\mu_\w$ on the clopen cylinder sets
$A(\sigma)=\{\tau\in\D^\N|\tau_{|\sigma|}=\sigma\}\subset\D^\N$ by 
$\mu_\w(A(\sigma))=l_\sigma^\alpha(\w) X_\sigma(\w)$
and then we extend it to a random Radon measure on $\D^\N$. Following \cite{GMW}
we define measure $Q$ on the space $\D^\N\times\Omega$. For a Borel subset $B\subset
\D^\N\times\Omega$ we put
\[
Q(B)=\int\mu_\w(B_\w)dP(\w),
\]
where $B_\w$ are the sections of the set $B$ with respect to the second component.
Also we can extend the random variables $l_\sigma$, $X_\sigma$ and $T_\sigma$ onto the space
$\D^\N\times\Omega$ by considering
\[
l_k(\eta,\w)=l_{\eta|_k}(\w),\ X_k(\eta,\w)=X_{\eta|_k}(\w),\ T_k(\eta,\w)=T_{\eta|_k}(\w).
\]
We  show that under the folowing assumption holds the following theorem.

\noindent{\bf Assumption 1}. 
There exist $p_0,\rho>0,$ $s_0\ge 0$ and a collection
of events $R_\sigma$ in the $\sigma$-algebra
generated by random vectors $(T_{\tau*1},\dots,T_{\tau*n}),$
$\sigma\prec\tau,$ $|\tau|<|\sigma|+s_0,$
such that $R_\sigma\cap\{K_\sigma\ne\emptyset\}\ne\emptyset,$
for every $w\in R_\sigma\cap\{K_\sigma\ne\emptyset\}$
there exists $x\in K_\sigma$ with
$\dist(x,\partial J_\sigma)\ge\rho l_\sigma,$ and
$\int\limits_{R_\sigma}\summ_{|\tau|=s_0}
\prodd_{i=1}^{s_0}T^\alpha_{\sigma*\tau|_i}dP=p_0.$

This assumption was proven to hold for random homogeneous self-similar sets \cite[Proposition 2]{Be2} (i.e. sets, for which not only the reduction ratios but also the similarity maps between parent and its offspring are i.i.d.).  
\begin{theorem}
Suppose that the construction satisfies assumption~1, then 
\item{1.} If $P(0<X\le a)\asymp a^\beta,\ a\to 0$ and $\f(t)=t^\alpha g(t)$ is 
a gauge function, then
$\int\limits_{0^+}\frac{g^{\beta+1}(s)}{s}ds=+\infty$ implies
$P({\mathcal P}^\f(K(w))=+\infty|K(w)\ne\emptyset)=1$.
\item{2.} If $-\log P(0<X\le a)\asymp a^{1/\beta},\ a\to 0$
for all $a\in(0,1),$ then for $\f(t)=t^\alpha g(t)=t^\alpha |\log |\log t||^\beta$,
$P({\mathcal P}^\f(K(w))>0|K(w)\ne\emptyset)=1$.
\end{theorem}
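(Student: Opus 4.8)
The plan is to derive both parts from the classical density theorems for packing measure (Mattila, \cite{M}): if $\nu$ is a finite Borel measure supported on $K(\w)$ and $\ov{D}^\f_\nu(x)=\limsup_{r\to 0}\nu(B(x,r))/\f(2r)$ denotes the upper $\f$-density, then $\ov{D}^\f_\nu\equiv 0$ on a set of positive $\nu$-measure forces ${\mathcal P}^\f(K)=+\infty$, while $\ov{D}^\f_\nu<\infty$ on a set of positive $\nu$-measure forces ${\mathcal P}^\f(K)>0$. The natural candidate for $\nu$ is the projection $\tilde\mu_\w$ of $\mu_\w$ under the coding map $\pi:\D^\N\to K(\w)$, which has total mass $X(\w)$ and satisfies $\tilde\mu_\w(K_\sigma)=\mu_\w(A(\sigma))=l_\sigma^\alpha X_\sigma$. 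Thus everything reduces to computing, almost surely, the upper $\f$-density of $\tilde\mu_\w$ at $\tilde\mu_\w$-typical points.

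First I would establish a geometric comparison reducing the continuous $\limsup$ over $r$ to a symbolic $\limsup$ over the levels $k$ along the branch $\eta$ coding $x=\pi(\eta)$. Using the open set condition \eqref{cond4} together with the deep-point separation furnished by Assumption~1 (a point $x\in K_\sigma$ with $\dist(x,\partial J_\sigma)\ge\rho l_\sigma$), one controls both how much sibling mass a ball $B(x,r)$ can capture and how small $r$ can be made while still enclosing the mass of a cylinder. The aim is to show that, up to multiplicative constants depending only on $\rho$, $\alpha$ and the bounded number of neighbours admitted by the open set condition, $\ov{D}^\f_{\tilde\mu}(x)$ is comparable to a $\limsup_k$ of an explicit expression in $X_{\eta|_k}$ and $g(l_{\eta|_k})$. \emph{This geometric step is the main obstacle}, precisely because no doubling is assumed on $\f$: the ratio $\f(2r)/\f(r)$ need not be bounded, so one cannot freely slide between the scale $l_{\eta|_k}$ and neighbouring radii, and the radii lying in the gaps between a cylinder and its offspring (which are large exactly when the contraction ratio $T_{\eta|_k}$ is small) must be treated separately. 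I expect to import the relevant covering and separation estimates from \cite{BM} and \cite{Be2}, where the constant-ratio case was handled, and to isolate the non-doubling difficulty in a single lemma.

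With the density identified, the problem becomes the probabilistic evaluation of the symbolic $\limsup$. Here I would pass to the measure $Q$ on $\D^\N\times\Omega$, under which the branch $\eta$ is distributed according to $\tilde\mu_\w$; a $Q$-almost sure statement then yields the desired $\tilde\mu_\w$-almost sure statement for $P$-almost every $\w$, and finally the conditional statement given $K(\w)\ne\emptyset$. Under $Q$ the offspring along the spine are selected with probability proportional to $T_{\sigma*i}^\alpha X_{\sigma*i}$, so the spine values $X_{\eta|_k}$ follow the size-biased law; in particular the small-value asymptotics get shifted from $P(0<X\le a)\asymp a^\beta$ to $Q(X_{\eta|_k}\le a)\asymp a^{\beta+1}$, which is the source of the exponent $\beta+1$ in the statement. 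The scales are governed by the random walk $-\log l_{\eta|_k}=-\summ_{j=1}^k\log T_{\eta|_j}$, which by the law of large numbers grows linearly, $-\log l_{\eta|_k}\sim\lambda k$ with $\lambda=E_Q[-\log T_1]$, so that $\summ_k h(l_{\eta|_k})\asymp\lambda^{-1}\int_{0^+}h(s)\,s^{-1}ds$ for monotone $h$. Applying the Extended Borel--Cantelli lemma to the spine sequence then converts the convergence or divergence of $\int_{0^+}g^{\beta+1}(s)\,s^{-1}ds$ into the absence or presence of density-vanishing along the branch, giving part~1. For part~2 the same scheme with the much thinner (stretched-exponential) lower tail makes the relevant running extreme of the spine grow like an iterated logarithm, and a direct computation shows that the borderline gauge $g(t)=|\log|\log t||^\beta$ keeps $\ov{D}^\f_{\tilde\mu}$ finite, hence ${\mathcal P}^\f(K)>0$.

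The delicate point in the last paragraph is that the spine values $X_{\eta|_k}$ are \emph{not} independent, so the ordinary Borel--Cantelli lemma does not apply; this is exactly why the Extended Borel--Cantelli lemma, which needs only control of conditional probabilities along the filtration generated by the levels, is invoked. I would verify its hypotheses using the Markovian structure of the spine under $Q$ together with the conditional i.i.d. assumption \eqref{cond5}, and check that the discrepancy between $\summ_k h(l_{\eta|_k})$ and $\lambda^{-1}\int_{0^+}h(s)\,s^{-1}ds$ is negligible for the gauges at hand.
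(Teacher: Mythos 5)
There is a genuine gap, and it lies at the very first step: you reduce the theorem to the wrong density theorem for packing measure. Both parts of the statement are \emph{lower} bounds on $\mathcal{P}^\f(K)$ (namely $=+\infty$ and $>0$), and for lower bounds on packing measure the relevant quantity is the \emph{lower} density $\varliminf_{r\to 0}\nu(B(x,r))/\f(2r)$ (Taylor--Tricot; this is the form on which \cite{BM} and \cite{Be2} rest): the packing pre-measure is a supremum over packings, so one may choose the radii, and it suffices that favourable radii occur along \emph{some} sequence $r\to 0$. That is precisely what the machinery delivers: $Q(\varlimsup_k B_k)=1$ produces infinitely many levels $k$ at which $\mu_\w(B(x,\rho l_{\eta|_k}))<C\f(\rho l_{\eta|_k})$, i.e.\ it controls the $\varliminf$ of the density and says nothing about the $\varlimsup$. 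Your target for part~1, that the upper density $\ov{D}^\f_{\tilde\mu}\equiv 0$, is in general \emph{false}: take $g\equiv 1$ in Example~1 (the random Cantor set), where $\int_{0^+}s^{-1}ds=+\infty$ so part~1 asserts $\mathcal{P}^{t^\alpha}(K)=+\infty$, while the exact Hausdorff gauge $t^\alpha|\log|\log t||^{1-\alpha}$ dominates $t^\alpha$, hence $\mathcal{H}^{t^\alpha}(K)=0$ and, by the Hausdorff density theorem, the upper $t^\alpha$-density of $\tilde\mu_\w$ equals $+\infty$ at $\tilde\mu_\w$-a.e.\ point. The same misidentification undermines part~2. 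Note also that the difficulty you single out as the main obstacle --- controlling the intermediate radii in the gaps between cylinders without a doubling condition --- is an artifact of working with the $\limsup$: with the lower density one evaluates $\f$ only at the chosen radii $\rho l_{\eta|_k}$, and the geometric input collapses to the containment $B(x,\rho l_\sigma)\subset J_\sigma$ furnished by Assumption~1 together with monotonicity of $\f$.

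Once the density is corrected, your outline essentially becomes the paper's: the theorem reduces, as in \cite[Theorem 2]{Be2}, to the single statement $Q(\varlimsup_k B_k)=1$, which is Proposition~\ref{main}. The genuinely new content of the paper is the verification of the correlation condition \eqref{bcl:cond} of Theorem~\ref{ebc} without Assumption~2, i.e.\ the bound $\sum_{i<j}\big(Q(B_iB_j)-Q(B_i)Q(B_j)\big)\le M\sum_j Q(B_j)$; your appeal to ``the Markovian structure of the spine'' does not engage with this. The paper obtains it by decomposing $X_{i+s_0}$ as $Y_{i,j}+X_j\prod_k T_k^\alpha$, applying Markov's inequality to the product of ratios, and --- crucially --- invoking a uniform bound on the density of the law of $X$ near $0$ from \cite{Liu}. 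None of these ingredients appear in your plan, so even after repairing the density theorem the proposal does not yet contain a proof of the step that distinguishes this result from \cite{Be2}.
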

We call case 1 in the above theorem the case of polynomial decay, and case 2 - the case of exponential decay.

To get the same conclusion, Berlinkov \cite[Theorem 2]{Be2} used an additional condition
that for $B_k=\{l_k^\alpha X_{k+s_0}<C\f(l_k\rho)\}\cap R_k$ holds
$Q(\varlimsup_k B_k)=1$, where $R_k$ is the extension of events $R_\sigma$
onto the space $\D^\N\times\Omega$: $R_k=\{(\eta,\w)\in\D^\N\times\Omega|\w\in R_{\eta|_k }\}$.
It was proven that the equality $Q(\varlimsup_k B_k)=1$ holds under an additional assumption

\noindent{\bf Assumption 2}.
There exists $\delta>0$ such that $P(T_i =\delta|T_i\ne 0)=1$ for all $i.$

We show that this second assumption is not necessary by proving the following
\begin{proposition}\label{main}
Let $B_k=\{l_k^\alpha X_{k+s_0}<C\f(l_k\rho)\}\cap R_k$, then $Q(\varlimsup_k B_k)=1$ for all constants
$C$ small enough in polynomial case, and in exponential case for all constants $C$ for which the series
 $\summ_{k=1}^\infty Q(B_k)$ diverges.
\end{proposition}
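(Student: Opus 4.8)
The plan is to deduce the conclusion from a conditional (martingale) form of the second Borel--Cantelli lemma applied along the $Q$--selected ``spine'' $\eta$, since the events $B_k$ are strongly dependent -- they are nested along a single branch of the tree -- and the classical independent form does not apply. First I would rewrite $B_k$ in scale--free form: because $\f(t)=t^\alpha g(t)$, the defining inequality $l_k^\alpha X_{k+s_0}<C\f(l_k\rho)$ is equivalent to $X_{k+s_0}<C\rho^\alpha g(l_k\rho)$, so that
\[
B_k=\{X_{k+s_0}<C\rho^\alpha g(l_k\rho)\}\cap R_k .
\]
Recall that $Q$ is a probability measure, since $E[X_\emptyset]=E\big[\summ_i T_i^\alpha\big]=1$. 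I would then fix the filtration $\mathcal F_k$ on $\D^\N\times\Omega$ generated by the spine $\eta$ down to level $k+s_0$, by the reduction vectors ${\mathbf T}_\sigma$ for all $\sigma$ within depth $s_0$ of that level (which is what the events $R_{\eta|_k}$ read off), and by the martingale limits $X_\sigma$ with $|\sigma|=k+s_0$. With this choice each $B_k$ is $\mathcal F_k$--measurable while $l_k$ is determined by the spine revealed by stage $k-1$, so the Extended Borel--Cantelli lemma gives $Q(\varlimsup_k B_k)=1$ as soon as $\summ_k Q(B_k\mid\mathcal F_{k-1})=+\infty$ holds $Q$--almost surely.

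The heart of the matter is to estimate $Q(B_k\mid\mathcal F_{k-1})$. Here I would use the branching identity $X_\sigma=\summ_{i=1}^n T_{\sigma*i}^\alpha X_{\sigma*i}$ together with the fact that, under $Q$, the next spine vertex $\sigma*i$ is selected with probability proportional to $l_{\sigma*i}^\alpha X_{\sigma*i}$. Because $Q$ weights by $X$, the small--ball event $\{X_{k+s_0}<\e\}$ carries an extra factor $X$ inside the expectation, so in the polynomial regime $P(0<X\le a)\asymp a^\beta$ one finds, by parts, $E[X\,\1\{0<X\le\e\}]\asymp\e^{\beta+1}$ and hence $Q(\{X_{k+s_0}<C\rho^\alpha g(l_k\rho)\}\mid\mathcal F_{k-1})\asymp g^{\beta+1}(l_k\rho)$; this is exactly where the exponent $\beta+1$ of the hypothesis originates. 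Assumption~1 enters only through its quantitative clause $\int_{R_\sigma}\summ_{|\tau|=s_0}\prodd_{i=1}^{s_0}T^\alpha_{\sigma*\tau|_i}\,dP=p_0$, which says precisely that $R_k$ carries $Q$--conditional mass bounded below by a fixed multiple of $p_0$; combined with the small--ball factor this yields $Q(B_k\mid\mathcal F_{k-1})\ge c\,g^{\beta+1}(l_k\rho)$ for a positive constant $c$. In the exponential regime the same scheme produces the corresponding estimate for the gauge $g(t)=|\log|\log t||^\beta$, and there divergence of $\summ_k Q(B_k)$ is assumed outright.

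It then remains to show $\summ_k g^{\beta+1}(l_k\rho)=+\infty$ $Q$--a.s. (polynomial case), and this is where removing Assumption~2 really bites. Writing $S_k=-\log l_k=\summ_{j\le k}(-\log T_{\eta|_j})$, the increments $-\log T_{\eta|_j}$ are, under $Q$, a stationary ergodic (indeed i.i.d.) sequence of finite positive mean, so $S_k$ is a random walk with positive drift rather than the arithmetic progression that the constant ratio of Assumption~2 previously guaranteed. I would therefore invoke renewal theory: the expected occupation $\summ_k E_Q[\,h(e^{-S_k})\,]$ of a non--negative $h=g^{\beta+1}$ is comparable, up to two--sided constants furnished by the renewal density, to $\int_0^\infty h(e^{-u})\,du=\int_{0^+}\frac{g^{\beta+1}(s)}{s}\,ds$, which diverges by hypothesis, and $Q$--a.s. divergence of the random sum then follows from the ergodicity of the increments. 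The main obstacle is precisely this step: since no doubling or monotonicity is imposed on $g$, the passage from the integral to the random sum over the non--equally--spaced points $S_k$ cannot rely on a naive integral test and must be carried out through the renewal/occupation estimate above (the exponential case being handled by the analogous, and simpler, direct verification). Once $\summ_k Q(B_k\mid\mathcal F_{k-1})=+\infty$ is established $Q$--a.s. in both regimes, the Extended Borel--Cantelli lemma gives $Q(\varlimsup_k B_k)=1$.
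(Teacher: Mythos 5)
Your overall strategy diverges from the paper's: the paper does not use a conditional (L\'evy-type) Borel--Cantelli lemma along the spine, but the Ortega--Wschebor version (Theorem~\ref{ebc}), which only requires the divergence of $\summ_k Q(B_k)$ (imported from \cite[Proposition 1]{Be2}) together with a \emph{pairwise} correlation bound $\summ_{i<j}\big(Q(B_iB_j)-Q(B_i)Q(B_j)\big)=O\big(\summ_j Q(B_j)\big)$. That choice is not cosmetic, and your route has a genuine gap exactly where it departs from it. Your key claim is the conditional small-ball estimate $Q\big(X_{k+s_0}<C\rho^\alpha g(l_k\rho)\,\big|\,\mathcal F_{k-1}\big)\asymp g^{\beta+1}(l_k\rho)$ with constants not depending on the conditioning. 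But $\mathcal F_{k-1}$ contains $X_{k-1+s_0}$, and under $Q$ the pair $(X_{k-1+s_0},X_{k+s_0})$ is strongly dependent: the branching identity gives $X_{k-1+s_0}\ge T_{k+s_0}^\alpha X_{k+s_0}$, and the spine child is selected with probability $T_i^\alpha X_i/X_{k-1+s_0}$, so the conditional law of $X_{k+s_0}$ given the past is \emph{not} the size-biased law of $X$, and its small-ball behaviour cannot be read off from $P(0<X\le a)\asymp a^\beta$; in particular the natural bound carries a factor $1/X_{k-1+s_0}$ and degrades on the event that $X_{k-1+s_0}$ is large. You give no argument for uniformity here, and this is precisely the dependence that the paper circumvents by working only with pairs $(i,j)$, writing $Y_{i,j}=X_{i+s_0}-X_j\prodd_{k=i+s_0+1}^{j}T_k^\alpha$ (independent of the deeper data), and using the boundedness of the $Q$-density of $X$ from \cite[Corollary to Theorem 2.2]{Liu} plus a Markov bound on $X_j\prodd T_k^\alpha$.

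The final step also has a gap. You reduce to showing $\summ_k g^{\beta+1}(l_k\rho)=+\infty$ $Q$-a.s.\ and invoke renewal theory plus ``ergodicity''; but the renewal/occupation estimate only controls the \emph{expectation} $\summ_k E_Q[h(e^{-S_k})]$, and divergence in expectation does not imply almost sure (or even positive-probability) divergence of the random sum without a further argument (a Hewitt--Savage zero--one law reduces it to positive probability, but that still requires, say, a second-moment or concentration estimate, and the comparison $\int h\,dU\asymp\int h\,du$ itself needs the walk to be spread out and some regularity of $g$, neither of which you may assume). Moreover this detour is unnecessary for the proposition as stated: divergence of $\summ_k Q(B_k)$ is already established in \cite[Proposition 1]{Be2} in the polynomial case and is a standing hypothesis in the exponential case, so the only missing ingredient is the correlation (or conditional) control, which is where your argument is incomplete.
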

Together with \cite[Theorem 6]{BM}, in which the exact packing dimension bound was proven from the other side, this allows us to tell the exact packing dimension in the case of exponential decay or absence of one in the case of polynomial decay. The results from the article of Q. Liu (\cite{Liu}) sometimes allow us to determine the exponent of decay, and whether it is polynomial or exponential. Current state of results about small ball probabilities of the martingale limit to the extent of the author's knowledge covers only a fraction of the cases we may have.

{\noindent\bf 3. } Let us have look at some examples. In the following example the rate of decay is polynomial and the reduction ratios are independent, which allows us to apply \cite[Theorem 4.1(iii)]{Liu}.
\begin{example}
A random Cantor set.
\end{example}
Choose two numbers independently at random with respect to the uniform distribution from $J_0=[0,1]$. Let $J_1$, $J_2$ be the rightmost and leftmost subintervals of the partition (cf. \cite[Example 6.3]{GMW}). Then
the vector $(T_1,T_2)$ has density 2 with respect to the Lebesgue measure on the triangle $0\le T_1,T_2\le 1$,
$T_1+T_2\le 1$. By \cite[Example 4.2]{MW}, the Hausdorff dimension $\alpha=(\sqrt{17}-3)/2.$
From geometrical considerations
\[
P(T_1<x)=P(T_2<x)=x-x^2/2,
\]
and thus
\[
P(T_1^\alpha<x)=P(T_2^\alpha<x)=(x-x^2/2)^{1/\alpha}.
\]

By \cite[Theorem 4.1]{Liu}, $P(X<x)\asymp x^{2/\alpha},\ x\to 0$. Hence for all gauge functions the corresponding packing measure will be either 0 or infinite, depending on the integral test (note that we do not
require the doubling condition, as noted in the beginning of this note), therefore the exact packing dimension function does not exist as opposed to the exact Hausdorff dimension function, which equals
$t^\alpha|\log |\log t||^{1-\alpha}$ (cf. \cite[Example 6.3]{GMW}).

The next example is complicated by the fact that the reduction ratios are not independent.
\begin{example}
The zero set of the Brownian bridge.
\end{example}
Let $B_t$ denote the one-dimensional Brownian motion starting at 0. Then $B^0_t=B_t-t B_1$ is the Brownian bridge $(0\le t \le 1)$. This set can be considered as a random self-similar set as pointed out in
\cite[Example 6.1]{GMW}. If we define $\tau_1=\inf\{t\le 1/2\vert B^0_t=0\}$,
$\tau_2=\sup\{t\ge 1/2\vert B^0_t=0\}$, and set $J_0=[0,1]$, $J_1=[0,\tau_1]$, $J_2=[\tau_2,1]$ and
continue by recursion, then the zero set of the Brownian bridge can be represented as a random self-similar set with
reduction ratios $T_1=\tau_1$ and $T_2=1-\tau_2$ which are identically distributed but dependent random variables with joint distribution density (cf. \cite[(6.12)]{GMW})
\[
f(u,v)=\frac{1}{2\pi}\1_{[0,1/2]\times[0,1/2]}(u,v)(vt)^{-1/2}(1-v-t)^{-3/2}
\]

The Hausdorff dimension of this set $\alpha=1/2$ a.s.
By Example 6.3 in \cite{GMW}, for all $0<\xi<1/2$ holds $E[1/\min(T_1^\xi,T_2^\xi|T_i>0)]<\infty$.
By \cite[(6.9)]{GMW} and from the article of Chung \cite[(2.6)]{C} we can deduce that
\[
P(T_1<x)=P(T_2<x)=\frac 2\pi \arcsin x \asymp \sqrt{x},\ x\to 0
\]
and therefore  $P(T_1^\alpha<x)=P(T_2^\alpha<x)\asymp x, x\to 0$. Thus by \cite[Theorem 2.4]{Liu}
we have $P(X<x)=O(x^2)$. For the lower probability estimate of $P(X<x)$ we can use \cite[Lemma 3.4(i)]{Liu}.
To get an asymptotic estimate, according to that lemma, it suffices to prove
that $P(T_1^\alpha+T_2^\alpha<x)\ge Cx^2$ for some constant $C>0$ when $x$ approaches 0.
\[
P(T_1^\alpha+T_2^\alpha<x)=E_P[\1_{\{T_1^\alpha+T_2^\alpha<x\}}]=\frac{1}{2\pi}
\int_0^{1/2}\int_0^{1/2}\1_{\sqrt{v}+\sqrt{t}<x}(vt)^{-1/2}(1-v-t)^{-3/2}dvdt
\]
After a change of variables $z=\sqrt{v}$, $y=\sqrt{t}$, we obtain
\[
P(T_1^\alpha+T_2^\alpha<x)=
\frac {2}{\pi}\int_0^{\sqrt{1/2}}\int_0^{\sqrt{1/2}}\1_{z+y<x}(1-z^2-y^2)^{-3/2}dzdy.
\]
We continue by going over to polar coordinates and noting that the triangle $0\le z+y<x$ contains
quarter of a ball with center 0 and radius $\sqrt{3}x/2$
\begin{multline*}
P(T_1^\alpha+T_2^\alpha<x)\ge \frac{2}{\pi}\int_0^{\pi/2}\int_0^{\sqrt{3}x/2}\frac{rdrd\phi}{(1-r^2)^{3/2}}
=(1-r^2)^{-1/2}\vert^{\sqrt{3}x/2}_{0}\\
=\Big( \frac{1}{\sqrt{1-3x^2/4}}-1\Big)\asymp \frac{3x^2}{8},\ x\to 0.
\end{multline*}
Thus we can state that there is no exact packing dimension for the zero set of the Brownian bridge. 
%

{\noindent \bf 4.} In order to prove Proposition~\ref{main} we will use the following result from the article of Ortega and Wschebor (\cite{OW}), which is an extension of Borel-Cantelli lemma.
\begin{theorem}\label{ebc}
Let $\{B_k\}_{k=1}^\infty$ be a sequence of events in a probability space
such that $\summ_{k=1}^\infty Q(B_k)=\infty,$ and 
\begin{equation}\label{bcl:cond}
\mathop{\varliminf}\limits_{k\to\infty}\frac{\summ_{1\le i<j\le k}\big(
Q(B_i B_j)-Q(B_i)Q(B_j)\big)}{\big(\sum_{i=1}^k Q(B_i)\big)^2}\le 0,
\end{equation}
then $Q\big(\varlimsup B_k\big)=1.$
\end{theorem}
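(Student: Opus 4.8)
The plan is to prove this by the classical second-moment (Paley--Zygmund) method applied to the counting variables
\[
S_k=\summ_{i=1}^k\1_{B_i},\qquad T_k:=E_Q[S_k]=\summ_{i=1}^k Q(B_i).
\]
Since $\summ_{k=1}^\infty Q(B_k)=\infty$ we have $T_k\to\infty$, and because $\varlimsup B_k=\{S_\infty=\infty\}$ with $S_\infty=\limm_{k\to\infty}S_k$, the goal reduces to $Q(S_\infty=\infty)=1$. First I would expand the second moment: writing $\Delta_k=\summ_{1\le i<j\le k}\big(Q(B_iB_j)-Q(B_i)Q(B_j)\big)$ for the covariance sum appearing in \eqref{bcl:cond}, a direct computation gives
\[
E_Q[S_k^2]=\summ_{i=1}^k Q(B_i)+2\summ_{1\le i<j\le k}Q(B_iB_j)=T_k^2+T_k-\summ_{i=1}^k Q(B_i)^2+2\Delta_k\le T_k^2+T_k+2\Delta_k,
\]
so that
\[
\frac{(E_Q[S_k])^2}{E_Q[S_k^2]}\ge\frac{T_k^2}{T_k^2+T_k+2\Delta_k}=\frac{1}{1+T_k^{-1}+2\Delta_k/T_k^2}.
\]

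Next I would invoke the Paley--Zygmund inequality. For a fixed $\theta\in(0,1)$, splitting $S_k=S_k\1_{\{S_k\le\theta T_k\}}+S_k\1_{\{S_k>\theta T_k\}}$ and applying Cauchy--Schwarz to the second term yields $(1-\theta)T_k\le\sqrt{E_Q[S_k^2]}\,\sqrt{Q(S_k>\theta T_k)}$, hence
\[
Q(S_k>\theta T_k)\ge(1-\theta)^2\frac{(E_Q[S_k])^2}{E_Q[S_k^2]}.
\]
Now I would feed in hypothesis \eqref{bcl:cond}: since $\varliminf_k\Delta_k/T_k^2\le0$, there is a subsequence $k_m\to\infty$ along which $\Delta_{k_m}/T_{k_m}^2$ converges to a nonpositive limit; as $T_{k_m}^{-1}\to0$ as well, the denominator $1+T_{k_m}^{-1}+2\Delta_{k_m}/T_{k_m}^2$ has limit superior at most $1$. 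Combined with the trivial bound $(E_Q[S_k])^2/E_Q[S_k^2]\le1$, which holds because $\Var_Q(S_k)\ge0$, this forces $(E_Q[S_{k_m}])^2/E_Q[S_{k_m}^2]\to1$ along the subsequence.

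Finally I would transfer the estimate to the tail event. For each fixed $N$, the monotonicity of $(S_k)$ together with $T_{k_m}\to\infty$ gives $\{S_{k_m}>\theta T_{k_m}\}\subset\{S_{k_m}>N\}\subset\{S_\infty>N\}$ for all large $m$, so
\[
Q(S_\infty>N)\ge\varlimsup_{m}Q(S_{k_m}>\theta T_{k_m})\ge(1-\theta)^2.
\]
Letting $N\to\infty$ yields $Q(S_\infty=\infty)\ge(1-\theta)^2$, and letting $\theta\to0$ gives $Q(\varlimsup B_k)=Q(S_\infty=\infty)=1$, as required.

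The routine part is the moment expansion. The delicate step is the subsequence argument: one must extract from the liminf hypothesis a subsequence on which the Paley--Zygmund ratio tends exactly to $1$, and then push the lower bound from the finite-level events $\{S_{k_m}>\theta T_{k_m}\}$ to the tail event $\{S_\infty=\infty\}$ uniformly in $\theta$ before sending $\theta\to0$. It is worth noting incidentally that the exact identity for $E_Q[S_k^2]$ forces $\Delta_k/T_k^2\ge-\tfrac12 T_k^{-1}$, so the $\varliminf$ in \eqref{bcl:cond} is automatically $\ge0$ and is therefore in fact equal to $0$; this removes any concern that the limiting value of the ratio could exceed $1$.
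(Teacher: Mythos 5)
Your proposal is correct. One thing to be aware of: the paper itself gives no proof of this statement at all --- Theorem~\ref{ebc} is imported verbatim from Ortega and Wschebor \cite{OW} and used as a black box to deduce Proposition~\ref{main}, so there is no internal argument to compare against; what you have done is supply the missing proof, and it holds up under checking. Your argument is the classical second-moment route (essentially the Kochen--Stone/Erd\H{o}s--R\'enyi method): the moment expansion $E_Q[S_k^2]=T_k+2\summ_{1\le i<j\le k}Q(B_iB_j)=T_k^2+T_k-\summ_{i=1}^k Q(B_i)^2+2\Delta_k$ is exact, the Paley--Zygmund bound $Q(S_k>\theta T_k)\ge(1-\theta)^2(E_Q[S_k])^2/E_Q[S_k^2]$ is standard, and the key point --- that hypothesis \eqref{bcl:cond} forces the ratio $(E_Q[S_{k_m}])^2/E_Q[S_{k_m}^2]$ to tend to exactly $1$ along a subsequence, so that the $N\to\infty$ then $\theta\to 0$ limits yield probability $1$ without invoking any zero--one law --- is precisely the strength of the liminf condition over the weaker Kochen--Stone conclusion $Q(\varlimsup B_k)\ge\varlimsup_k T_k^2/E_Q[S_k^2]$. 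Your incidental remark is also right and worth keeping: since $\Var_Q(S_k)\ge 0$ gives $2\Delta_k\ge -T_k+\summ_{i=1}^k Q(B_i)^2\ge -T_k$, one has $\Delta_k/T_k^2\ge -\tfrac12 T_k^{-1}\to 0$, so the liminf in \eqref{bcl:cond} is automatically nonnegative and hence equals $0$; this also guarantees the denominator $1+T_k^{-1}+2\Delta_k/T_k^2\ge 1$ never degenerates. The subsequence extraction and the monotonicity step $\{S_{k_m}>\theta T_{k_m}\}\subset\{S_\infty>N\}$ for large $m$ are both handled correctly, so the proof is complete as written.
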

Divergence of the series $\summ_{k=1}^\infty Q(B_k)$ for all $C>0$ in polynomial case, 
and for all $C>\rho^{-\alpha}e^{\alpha E_Q[|\log T_1|]}t_0^{-\beta}$,
where $t_0=\mathop{\underline{\lim}}\limits_{x\to 0}-x^{-1/\beta}\log P(0<X\le x)<\infty$,
in exponential case was established in \cite[Proposition 1]{Be2}.  Then
Proposition \ref{main} immediately follows by Theorem~\ref{ebc}
from divergence of the series and the following lemma

\begin{lemma} In polynomial case there exists $M>0$ such that for all constants $C$ small enough
\[
\sum\limits_{i=1}^{j-1}Q(B_i B_j)-Q(B_i)Q(B_j)\le MQ(B_j).
\]
In the exponential case there exists $M>0$ such that the above inequality holds for all values
of $C$ for which the series $\summ_{k=1}^\infty Q(B_k)$ diverges.
\end{lemma}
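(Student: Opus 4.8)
The plan is to prove the displayed inequality uniformly in $j$, since feeding a bound $\summ_{i=1}^{j-1}\big(Q(B_iB_j)-Q(B_i)Q(B_j)\big)\le MQ(B_j)$ into \eqref{bcl:cond} makes the numerator $\le M\summ_{j\le k}Q(B_j)$ and hence the ratio $\le M/\summ_{i\le k}Q(B_i)\to 0$. Fix $j$ and split the sum at $i=j-s_0$. For the at most $s_0$ near-diagonal indices $j-s_0\le i<j$ I would use only monotonicity: $B_iB_j\subseteq B_j$ forces $Q(B_iB_j)-Q(B_i)Q(B_j)\le Q(B_j)$, so these indices contribute at most $s_0Q(B_j)$, within budget. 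All the work is in the far range $i<j-s_0$, where the blocks $[i,i+s_0]$ and $[j,j+s_0]$ are disjoint.

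For the far range I would organise everything around the spine description of $Q$. Writing $Q=\int\mu_\w\,dP$ with $\mu_\w(A(\sigma))=l_\sigma^\alpha X_\sigma$ and using that the vectors $\mathbf T_\tau$ are i.i.d., one checks that under $Q$ the ratios $T_{\eta|_1},T_{\eta|_2},\dots$ along the random path $\eta$ are i.i.d.\ (size-biased by $T^\alpha$), the subtrees branching off the path are independent unbiased copies of the construction, and the terminal martingale $X_{\eta|_{j+s_0}}$ is an independent size-biased copy of $X$ with a deterministic law; write $\hat F$ for its distribution function. Let $\mathcal D$ be generated by all the data down to level $j+s_0$ with $X_{\eta|_{j+s_0}}$ deleted, and decompose
\[
X_{\eta|_{i+s_0}}=c\,X_{\eta|_{j+s_0}}+S,\qquad c=\prodd_{i+s_0<\ell\le j+s_0}T_{\eta|_\ell}^\alpha,\ S\ge0,
\]
into its spine part and an off-spine remainder $S$, both $\mathcal D$-measurable. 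Since $\f(t)=t^\alpha g(t)$, the factor $l^\alpha$ cancels in the defining inequalities and both events become thresholds on the single remaining variable $X_{\eta|_{j+s_0}}$:
\[
Q(B_i\mid\mathcal D)=U_i,\quad Q(B_j\mid\mathcal D)=U_j,\quad Q(B_iB_j\mid\mathcal D)=\min(U_i,U_j),
\]
where $U_j=\1_{R_j}\hat F\big(C\rho^\alpha g(l_{\eta|_j}\rho)\big)$ and $U_i=\1_{R_i}\hat F\big((C\rho^\alpha g(l_{\eta|_i}\rho)-S)/c\big)$ lie in $[0,1]$.

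The decisive point is the elementary identity $\min(U_i,U_j)-U_iU_j=\min(U_i,U_j)\,(1-\max(U_i,U_j))$, which gives
\[
Q(B_iB_j)-Q(B_i)Q(B_j)=E_Q\big[\min(U_i,U_j)-U_iU_j\big]+\cov_Q(U_i,U_j).
\]
In the first expectation, on $\{U_i\ge U_j\}$ the integrand equals $U_j\,(1-\hat F(b_i))$ with $b_i=(C\rho^\alpha g(l_{\eta|_i}\rho)-S)/c$, while on $\{U_i<U_j\}$ it is at most $U_i$ and supported on a thin event. Because $T<1$ almost surely by \eqref{cond3} the spine is strictly contracting, $E_Q[\log T_{\eta|_\ell}^\alpha]<0$, and $c$ is geometrically small, of order $r^{j-i}$ for some $r<1$; hence $b_i$ is geometrically large on $\{S<C\rho^\alpha g(l_{\eta|_i}\rho)\}$ and $1-\hat F(b_i)$ is correspondingly small, using that $X$ has a finite moment $E[X^p]<\infty$ for some $p>1$ (which holds since $\summ_iT_i^\alpha\le n$). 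The residual $\cov_Q(U_i,U_j)$ is likewise geometrically small: after conditioning on the spine ratios, $U_i$ and $U_j$ interact only through the part of $S$ coming from below $\eta|_j$, which carries the geometrically small coefficient $\prodd_{i+s_0<\ell\le j}T_{\eta|_\ell}^\alpha$. Summing the resulting bounds, each of the form (geometric factor)$\times Q(B_j)$, over $i<j-s_0$ gives a far-range contribution $\le M'Q(B_j)$, and together with the near-diagonal estimate this is the lemma.

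I expect the genuinely delicate step to be the uniform (in both $j$ and $C$) quantitative decoupling just sketched: converting the qualitative independence of the off-spine subtrees into bounds $E_Q[\min(U_i,U_j)-U_iU_j]\le A\,r^{j-i}Q(B_j)$ and $\cov_Q(U_i,U_j)\le A\,r^{j-i}Q(B_j)$ with constants independent of $j$. This is exactly where the dichotomy enters through the small-ball behaviour of $X$: in the polynomial case $P(0<X\le a)\asymp a^\beta$ yields $\hat F(b)\asymp b^{\beta+1}$, the transition windows have probability $\asymp c$, and one must take $C$ small enough for the geometric series to close; in the exponential case the much smaller $\hat F$ forces the admissible $C$ to be precisely those for which $\summ_kQ(B_k)$ diverges. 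Controlling how much mass the law of $S$ (equivalently of $X_{\eta|_{i+s_0}}$) can place in a window of width $\sim c$, uniformly along the tree, is the hardest ingredient, and I would extract it from the available estimates on $P(0<X\le a)$ together with the recursive self-similarity $X_\sigma=\summ_iT_{\sigma*i}^\alpha X_{\sigma*i}$.
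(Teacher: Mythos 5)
Your high-level strategy is the same as the paper's: discard the at most $s_0$ near-diagonal indices, and for $i+s_0<j$ write $X_{i+s_0}$ as a remainder plus $X_j$ (or $X_{j+s_0}$) times the geometrically small spine product $\prodd_k T_k^\alpha$, so that the correlation reduces to the probability that $X_{i+s_0}$ falls in a window of geometrically shrinking width just above the threshold $C\rho^\alpha g(l_i\rho)$; the paper gets this reduction ready-made from \cite[Lemmas 6 and 10]{Be2} and controls the spine product by exactly the Markov-inequality step you describe. The genuine gap is in the step you yourself flag as the hardest one and then propose to settle by the wrong means. Bounding the mass that the law of $X_{i+s_0}$ puts in a window of width $\sim\delta^{j-s_0-i}$ located at $C\rho^\alpha g(l_i\rho)$, which is bounded away from $0$, cannot be extracted from the hypotheses $P(0<X\le a)\asymp a^\beta$ or $-\log P(0<X\le a)\asymp a^{1/\beta}$: these describe the small-ball behaviour at the origin only and say nothing about the modulus of continuity of the distribution function elsewhere, and the recursion $X=\summ_i T_i^\alpha X_i$ does not bootstrap one into the other. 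The paper's proof rests on a separate, substantive input: by Liu's Corollary to Theorem 2.2 the law of $X$ under $Q$ is absolutely continuous with density bounded by a constant $M_2$ on $[0,1]$, which bounds the window probability by $M_2\delta^{j-s_0-i}$. Moreover, since that density bound is only available on $[0,1]$, one must separately control the event $\{C\rho^\alpha g(l_i\rho)>1-\delta^{j-s_0-i}\}$, and it is precisely there (via $\varlimsup_{t\to 0+}g(t)=\theta$, $l_i\searrow 0$ and Markov's inequality applied to $l_i$) that the condition ``$C$ small enough'' in the polynomial case actually originates; in your sketch the role of small $C$ is never pinned down.

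A secondary weak point is the decoupling of $B_i$ from $B_j$. Your conditional computation requires $U_i$ and $U_j$ to interact only through the single variable $X_{\eta|_{j+s_0}}$, and you then assert that the residual covariance is geometrically small ``after conditioning on the spine ratios.'' But $U_j$ depends on $R_j$, which is generated by all ratio vectors $\mathbf T_\tau$ with $\eta|_j\prec\tau$, $|\tau|<j+s_0$, while your remainder $S$ contains the martingale limits of subtrees branching off the spine at levels between $j$ and $j+s_0$, which depend on some of those same vectors; so the two quantities share randomness and the claimed covariance bound is asserted rather than proved. The paper sidesteps this entirely by invoking the already established inequality $Q(B_iB_j)\le p_0\,Q(Y_{i,j}\le C\rho^\alpha g(l_i\rho))\,Q(B_j)$ from \cite[Lemma 10]{Be2}; a self-contained version of your argument would have to reprove that estimate.
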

\begin{proof}
To estimate the expressions $Q(B_i B_j)-Q(B_i)Q(B_j)$, note that by
\cite[Lemma 6]{Be2} we have
\[
Q(B_i)=p_0Q(l_i^\alpha X_{i+s_0}\le C\f(\rho l_i)),
\]
and by \cite[Lemma 10]{Be2}, for  all $i$ and $j$ such that $i+s_0<j$, we have
\[
Q(B_i B_j)\le p_0Q(Y_{i,j}\le C\rho^\alpha g(l_i \rho))Q(B_j),
\]
where $Y_{i,j}=X_{i+s_0}-X_j \prodd_{k=i+s_0+1}^j T_k^\alpha$. Thus
\begin{multline}\label{covar}
Q(B_i B_j)-Q(B_i)Q(B_j)\le \\
p_0 Q(B_j)
Q\Big(C\rho^\alpha g(l_i \rho)<X_{i+s_0}\le X_j \prodd_{k=i+s_0+1}^j T_k^\alpha+C\rho^\alpha g(l_i \rho)\Big)
\end{multline}
Note that
\begin{equation}
M_1:=E_Q[T_1^\alpha]<1,
\end{equation}
and choose $\delta\in(M_1,1)$.
The last factor in inequality \eqref{covar} can be estimated as follows
\begin{multline}\label{part}
Q\Big(C\rho^\alpha g(l_i \rho)<X_{i+s_0}\le X_j \prodd_{k=i+s_0+1}^j T_k^\alpha+C\rho^\alpha g(l_i \rho)\Big)
\le\\
Q\Big(C\rho^\alpha g(l_i \rho)<X_{i+s_0}\le \delta^{j-s_0-i}+C\rho^\alpha g(l_i \rho\Big)
+Q\Big(X_j \prodd_{k=i+s_0+1}^j T_k^\alpha>\delta^{j-s_0-i}\Big),
\end{multline}
and the last term in the above inequality \eqref{part} can be bounded by Markov's inequality  
\begin{equation}\label{Markov}
Q\bigg(X_j \prodd_{k=i+s_0+1}^j T_k^\alpha>\delta^{j-s_0-i}\bigg)<E_Q[X_0](M_1/\delta)^{j-i-s_0},
\end{equation}
Note that by \cite[Corollary to Theorem 2.2]{Liu} we have the $Q$-density of $Q(0<X\le a)$ bounded by
constant $M_2$ on $[0,1]$, so 
\begin{multline}
\sum\limits_{i=1}^{j-1}Q\Big(C\rho^\alpha g(l_i \rho)<X_{i+s_0}\le X_j \prodd_{k=i+s_0+1}^j T_k^\alpha+C\rho^\alpha g(l_i \rho)\Big)
\le\\
s_0+\sum\limits_{i=1}^{j-s_0-1}\bigg[M_2 \delta^{j-s_0-i} +E[X_0](M_1/\delta)^{j-i-s_0}
+Q(C\rho^\alpha g(l_i \rho)>1- \delta^{j-s_0-i})\bigg]
\end{multline}
Note that by \cite[(1.14)]{MW}, 
$l_i(\w)\searrow0$ a.s., and suppose that $\varlimsup_{0+} g(t)=\theta>0$ (this can only happen in
case of polynomial decay). There exists $t_0>0$ such that for all $t<t_0$ we have $g(\rho t)<2\theta$.
Then for all $C$ such that $(1-\delta)/C\rho^\alpha>2\theta$ we have
\[
\sum\limits_{i=1}^\infty Q(C\rho^\alpha g(l_i \rho)>1- \delta) 
\le\sum\limits_{i=1}^\infty Q(l_i>t_0)\le
\sum\limits_{i=1}^\infty E_Q[T_1]^i/t_0<\infty.
\]
In case $\varlimsup_{0+}g(t)=0$ the considerations the inequluality
$\sum\limits_{i=1}^\infty Q(C\rho^\alpha g(l_i \rho)>1- \delta)<\infty$ holds
for all $C$ by similar considerations.
\end{proof}

\end{document}